\documentclass[11pt]{amsart}    
\usepackage{graphicx}
\usepackage{color}
\usepackage{hyperref}
\usepackage{geometry}
\usepackage{txfonts}
\usepackage{color}
\definecolor{Myred}{cmyk}{0.0,1.0,1.0,0.00}
\definecolor{Mypurple}{rgb}{0.5,0.0,0.5}
\newtheorem{theorem}{Theorem}

\usepackage{enumerate}

\begin{document}

\title[Spectral properties of the magnetic Robin Laplacian on curvilinear half-plane]
{Spectral properties of the magnetic Robin Laplacian on curvilinear half-plane}

\author{Diana Barseghyan (Schneiderov\'{a})$^\ast$}\thanks{$^\ast$Corresponding author.}
\address{Department of Mathematics, University of Ostrava, 30. dubna 22, 70103 Ostrava, Czech Republic}
\email{diana.schneiderova@osu.cz}

\author{Baruch Schneider}

\address{Department of Mathematics, University of Ostrava, 30. dubna 22, 70103 Ostrava, Czech Republic}
\email{baruch.schneider@osu.cz}

\author{Yifan Zhang}
\address
{Department of Mathematics, University of Ostrava, Ostrava, Czech Republic\\
Department of Applied Mathematics, VSB – Technical University of Ostrava, Ostrava, Czech Republic\\
Department of Algebra, Charles University, Prague, Czech Republic}
\email{yifan.zhang@osu.cz}

\keywords{Magnetic Robin Laplacian, essential spectrum, discrete spectrum, domain boundary perturbation}

\subjclass[2010]{ 58J50;  35P05;  81Q10}

\maketitle

\begin{abstract}
We analyse magnetic Robin Laplacian in curvilinear half-plane with a smooth boundary. It is well known that the
spectrum of the Robin Laplacian is unstable with respect to boundary deformations. This means that if the boundary is a straight line then the spectrum of the Robin Laplacian is purely essential. From the other hand, the perturbation of the boundary produces eigenvalues below the essential spectrum.
In this paper, the Robin-Laplace operator with a compactly supported magnetic field is considered.
We prove that the spectrum of the magnetic Robin Laplacian is stable under small and local deformations of the boundary.
\end{abstract}
\bigskip

\bigskip

\section{Introduction} \label{sect: intro}

As is well known, bending a two-dimensional quantum waveguide in the appropriate way induces bound states (see references \cite{DE95, ES89}, and \cite{GJ92}). Mathematically speaking, we can say that the Dirichlet
Laplacian on a smooth, asymptotically straight planar waveguide has at least
one isolated eigenvalue below the threshold of the essential spectrum.
Similar results have been obtained for a locally deformed waveguide, which  is equivalent 
to adding a small ``bump'' to a straight waveguide (see references \cite{BEGK01} and 
\cite{BGRS97}). Consequently, for any nonzero curvature satisfying certain regularity properties, at least one isolated eigenvalue appears below the essential spectrum.

Recall that a magnetic field, even a local one, can significantly affect the behaviour of waveguide systems, particularly the existence of a geometrically induced discrete spectrum. While a particle confined to a fixed-profile, asymptotically straight tube with a hard-wall boundary can have localized states whenever the tube is bent or deformed locally  (see \cite{EK15} for a comprehensive review of quantum waveguide theory), a local magnetic field can destroy this discrete spectrum.

A similar, in which the existence of bound states results from the geometry of the interaction support, has been observed for a class of singular Schr\"{o}dinger operators usually called leaky quantum wires, with an attractive contact interaction supported by a curve \cite{EK15}. As established in \cite{BE21} a local magnetic field can destroy the discrete spectrum again. A related result was obtained in \cite{BBS24}, where the authors considered a magnetic Schr\"{o}dinger operator with a non-negative potential supported on a strip that is a local deformation of a straight one, while the magnetic field was assumed to be nonzero and local.
 
In this work we consider another problem in which the region is a curvilinear half- plane and its boundary is described by a mixed-type condition, conventionally called Robin, with the parameter $\beta>0$ and the magnetic field is supposed to be local.
The result for a single infinite boundary curve naturally raises the question of the existence of bound states. The answer to this question is given in work \cite{EM14} where the authors established that if the curve is a nontrivial local deformation of the straight line, then the essential spectrum of the Robin Laplacian coincides with the half-line $[-\beta^2, \infty)$ and the discrete spectrum below $-\beta^2$ is non-empty.

The domain $\Omega$ studied in our paper is assumed to be a curvilinear upper half-plane with the boundary  $\Gamma = \{(a(s), b(s)): s\in\mathbb{R}\}$, where $a, b$ are sufficiently smooth functions satisfying
\begin{equation}\label{ab}
\dot{a}(s)^2 + \dot{b}(s)^2 = 1,
\end{equation} 
where dot marks the derivative with respect to $s$, so $s$ is the arc length of $\Gamma$. It is useful to introduce the signed curvature $\gamma(s)$ of $\Gamma$,
\begin{equation}
\label{curv.}
\gamma(s)=\dot{b}(s)\ddot{a}(s)- \dot{a}(s)\ddot{b}(s).
\end{equation}

We first derive the following identity, which will be used later. From (\ref{ab}) and (\ref{curv.}), we obtain
\begin{eqnarray}\nonumber\label{gamma} \gamma(s)^2= \left(\dot{b}(s)\ddot{a}(s)- \dot{a}(s)\ddot{b}(s)\right)^2\\\label{aux}=
\ddot{a}(s)^2- \ddot{a}(s)^2 \dot{a}(s)^2 + \ddot{b}(s)^2\dot{a}(s)^2- 2\ddot{a}(s)\dot{b}(s)\ddot{b}(s)\dot{a}(s).
\end{eqnarray}

With respect to identity (\ref{ab}), we have 
\begin{equation}\label{adot}
\dot{a}(s)\ddot{a}(s)+\dot{b}(s)\ddot{b}(s)=0.
\end{equation}
Therefore, (\ref{aux}) implies 
\begin{eqnarray}\nonumber\gamma(s)^2=
\ddot{a}(s)^2- \ddot{b}(s)^2 \dot{b}(s)^2 + \ddot{b}(s)^2\dot{a}(s)^2+ 2\ddot{b}(s)^2\dot{b}(s)^2= \\\label{id}
=\ddot{a}(s)^2+\ddot{b}(s)^2.
\end{eqnarray}

In fact, it is sufficient to know the function $\gamma$ only, since $a, b$ can be reconstructed from the relations
\begin{eqnarray}\nonumber 
a(s) = a(s_0) + \int_{s_0}^s \cos \left(\int_{s_0}^\eta \gamma(\theta)\,d \theta\right)\,d \eta\,,\\ \label{reconstr.}
b(s) = b(s_0) + \int_{s_0}^s \sin \left(\int_{s_0}^\eta \gamma(\theta)\,d \theta\right)\,d \eta\,,
\end{eqnarray}
where $s_0$ is a fixed number.

The object of our interest in this paper is the magnetic Robin Laplacian $H^\mathcal{A}_{\beta, \Omega}$ which is the unique self-adjoint operator associated with the following closed and below bounded quadratic form 
\begin{equation}\label{differential form}
q^\mathcal{A}_{\beta, \Omega}[\varphi] =\|(i\nabla+ \mathcal{A})\varphi\|^2_{L^2(\Omega)}- \beta\int_\Gamma|\varphi|^2\,d s\,,
\end{equation}
where $\beta>0$ is a fixed number, $\mathcal{A}$ is a magnetic potential supposed to be compactly supported and $\mathrm{Dom}(q^\mathcal{A}_{\beta, \Omega}) = \mathcal{H}^{1}(\Omega)$. 

During our paper we assume the following conditions: 
\begin{enumerate}[(a)]
\setlength{\itemsep}{3pt}

\item The graph of curve $\Gamma$ for $x\in (-\alpha, \alpha),\, \alpha>0$ is sufficiently smooth and for $x\in (-\infty, -\alpha+ \delta)\cup (\alpha- \delta, \infty)$ with some $0< \delta< \alpha$ coincides with the coordinate axis $\{x, 0\}_{x\in \mathbb{R}}$. 

\item
We are interested in magnetic fields
\begin{equation}\label{AA0}\mathcal{A}(x, y)= \mathcal{B} \mathcal{A}_0(x, y)\,,
\end{equation} 
where $\mathcal{B}> 0$ and
\begin{eqnarray}\nonumber
\mathcal{A}_0 =(0, a_2^0(x))\,,\\\label{A0}
a_2^0(x)= x,\,\text{ if}\,x\in (-\alpha, \alpha)\,\text{ and is vanishing at infinity}. 
\end{eqnarray}
\end{enumerate}

The similar operator but with zero magnetic field was considered in work \cite{EM14}. In this work it was proved that if $\Gamma$ is a local nontrivial perturbation of the straight line then the essential spectrum of operator 
$H^0_{\beta, \Omega}$ covers the half-line $[-\beta^2, \infty)$ but the discrete spectrum below the threshold of the essential spectrum is always non-empty. We will show that the suitable compactly supported magnetic field could destroy the discrete spectrum but preserves the essential spectrum.

\section{Emptiness of the discrete spectrum}
\label{emptiness}
\setcounter{equation}{0}

We will use the notation $\|\cdot\|_\infty= \|\cdot\|_{L^\infty(\mathbb{R})}$. The first main result of this paper is the following theorem:
 
\begin{theorem}\label{thm:dia-mag}
Suppose the validity of the assumptions (a)- (b) stated in Introduction. Let $0<c_1<c_2$ and $c_1\beta^\sigma\le \mathcal{B}\le c_2\beta^\sigma,\,\sigma\ge 2$. There exist positive numbers $\beta_0$ and $\varepsilon_0(\mathcal{B})$ such that for all
$\beta>\beta_0$ and under the condition $\mathrm{max}\left\{\|\gamma\|_\infty, \,\|\dot{\gamma}\|_\infty\right\}< \varepsilon_0(\mathcal{B})$
the discrete spectrum of operator $H^\mathcal{A}_{\beta, \Omega}$  below $-\beta^2$ is empty.
\end{theorem}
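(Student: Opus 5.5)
Since the essential spectrum is expected to start at $-\beta^2$, it suffices to prove the form inequality
\[
q^\mathcal{A}_{\beta,\Omega}[\varphi]\ge -\beta^2\|\varphi\|^2_{L^2(\Omega)},\qquad \varphi\in\mathcal{H}^1(\Omega),
\]
because then the infimum of the spectrum is at least $-\beta^2$ and no eigenvalues can lie below it. The plan is to localise with an IMS partition of unity $\chi_1^2+\chi_2^2=1$. Here $\chi_1$ is supported in a neighbourhood of the region $\{|x|<\alpha\}$ where $\Gamma$ is curved and the field is present, and $\chi_2$ is supported away from it. The localisation error $\|\nabla\chi_j\|_\infty^2$ is a fixed $O(1)$ quantity, which will be negligible against the large parameters $\beta,\mathcal{B}$. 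On $\mathrm{supp}\,\chi_2$ the boundary is the straight line $y=0$. The operator there is gauge-equivalent to a Robin half-plane Laplacian with bottom $-\beta^2$, possibly with a nonzero but curl-free potential. I would check that this piece satisfies $q[\chi_2\varphi]\ge-\beta^2\|\chi_2\varphi\|^2$. The check uses the diamagnetic inequality together with separation of variables: the $y$-direction Robin problem on the half-line has lowest eigenvalue exactly $-\beta^2$.

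The core is a lower bound on the interior piece showing that it lies above $-\beta^2$ by a margin absorbing the $O(1)$ IMS error. I would pass to tubular (Fermi) coordinates $(s,t)\mapsto (a(s),b(s))+t(-\dot b(s),\dot a(s))$ in a strip $0<t<t_0$ near $\Gamma$. The metric factor is $1-t\gamma(s)$, and the transformed form contains $\gamma,\dot\gamma$ only through the weight and an effective potential of size $O(\|\gamma\|_\infty^2+\|\dot\gamma\|_\infty)$. Under the hypothesis $\max\{\|\gamma\|_\infty,\|\dot\gamma\|_\infty\}<\varepsilon_0(\mathcal{B})$ the weight is in $[1-\varepsilon_0t_0,1+\varepsilon_0t_0]$. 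Comparing with the flat case then costs a factor $(1\pm C\varepsilon_0)$ on the kinetic term, plus a Robin-term error of order $\varepsilon_0\beta$ at the boundary. The remaining bulk region away from $\Gamma$ is handled by a Dirichlet–Neumann-type cut in $t$. Its contribution is at least $-C/t_0^2$, which is far above $-\beta^2$ and therefore harmless.

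In the strip, the transversal Robin operator on $(0,t_0)$ has lowest eigenvalue $-\beta^2+O(e^{-\beta t_0})$. Its ground state concentrates in a layer of width $\beta^{-1}$ near $\Gamma$. Projecting onto this state gives a one-dimensional effective operator in $s$ with magnetic potential equal to the tangential component of $\mathcal{A}$ along $\Gamma$, i.e.\ $\mathcal{B}\,\dot b(s)a(s)$ up to small corrections. The field is $\mathcal{B}\,\partial_x a_2^0=\mathcal{B}$ on $|x|<\alpha$. I would bound the magnetic energy below using the standard estimate $\|(i\nabla+\mathcal{A})u\|^2\ge \mathcal{B}\|u\|^2$ for compactly supported $u$ in a region of constant field $\mathcal{B}$, applied after cutting the boundary layer. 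Alternatively one can use the Robin-boundary analogue, where the local bottom behaves like $-\beta^2+\Theta\mathcal{B}$ with some $\Theta>0$ when $\mathcal{B}\gtrsim\beta^2$. This is where the assumption $\mathcal{B}\ge c_1\beta^\sigma$ with $\sigma\ge2$ enters. The magnetic gain is of order $\mathcal{B}\gtrsim\beta^2$, and it must dominate the curvature error $C\varepsilon_0\beta^2$, the Robin perturbation $C\varepsilon_0\beta$, and the IMS constant. Choosing $\varepsilon_0$ small depending on $\mathcal{B}$, then $\beta_0$ large, closes the estimate. The upper bound $\mathcal{B}\le c_2\beta^\sigma$ keeps $\varepsilon_0$ uniform along the family.

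The main obstacle is the boundary-localised magnetic lower bound. I must show that for functions concentrated within distance $\beta^{-1}$ of a nearly straight curve, the combination of Robin attraction and constant field $\mathcal{B}$ yields energy at least $-\beta^2+c\mathcal{B}$ (or $-\beta^2+c\min(\mathcal{B},\ldots)$), with a constant uniform in the small curvature. My first attempt would be to compute in the flat model, boundary $y=0$ with potential $(0,\mathcal{B}x)$. There I would use a Fourier transform in the tangential variable and show that the resulting de Gennes-type operator $-\partial_t^2+(\mathcal{B}t-\xi)^2$ with Robin condition has bottom $-\beta^2+c\mathcal{B}$ in this scaling regime. I would then transfer this to the curved case by the metric comparison described above.
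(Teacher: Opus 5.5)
Your target is sound, and it asks for less than the paper does. You only need a margin over $-\beta^2$ near the curved part, and the Robin de Gennes model supplies it. (This is your second alternative; the bulk bound $\|(i\nabla+\mathcal{A})u\|^2\ge\mathcal{B}\|u\|^2$ gives nothing inside the boundary layer, where the Robin attraction acts.) After scaling, the half-plane bottom is $\mathcal{B}\Lambda(m)$ with $m=\beta\mathcal{B}^{-1/2}\le c_1^{-1/2}$, and $\Lambda(m)+m^2$ is positive and continuous on that range.

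\textbf{The gap is in the IMS gluing.} The inequality $q^{\mathcal{A}}_{\beta,\Omega}[\varphi]\ge-\beta^2\|\varphi\|^2$ has no slack. Your exterior bound $q[\chi_2\varphi]\ge-\beta^2\|\chi_2\varphi\|^2$ (diamagnetic inequality plus the one-dimensional Robin problem) has no margin at all. After IMS you are left with $q[\varphi]\ge-\beta^2\|\varphi\|^2+\int(c\mathcal{B}\chi_1^2-V)|\varphi|^2$, where $V=|\nabla\chi_1|^2+|\nabla\chi_2|^2$. So you need $V\le c\mathcal{B}\chi_1^2$ pointwise. But $\chi_1^2+\chi_2^2=1$ gives $V=|\nabla\chi_1|^2/\chi_2^2\ge|\nabla\chi_1|^2$, so this would force $|\nabla\log\chi_1|\le\sqrt{c\mathcal{B}}$, and then $\chi_1$ could never reach $0$. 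Hence at the outer edge of the overlap there is always a layer where the localisation error is absorbed by nothing. That the error is $O(1)$ does not help: an error is negligible only against a margin, and the exterior side has none.

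A smaller point: $\chi_1$ must not be supported in a neighbourhood of $\{|x|<\alpha\}$. For $|x|>\alpha$ the field is $\mathcal{B}(a_2^0)'(x)$, which is not $\mathcal{B}$ and changes sign, because $a_2^0$ returns from $\pm\alpha$ to $0$. For the same reason the exterior potential is not curl-free, though the diamagnetic inequality makes that harmless.

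The paper never uses a partition of unity. By (\ref{qf}) the form splits exactly, with no error term, into the pieces over $\Omega\cap\{|x|<\alpha\}$ and $\Omega\cap\{|x|>\alpha\}$. It then proves the stronger statement that the inner piece is nonnegative. It does this by passing to a bounded curved strip $\tilde\Omega$ with the Robin term on all of $\partial\tilde\Omega$ (which only lowers the form) and invoking Theorem \ref{lemma1}.

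You can repair your route in either of two ways.
\begin{enumerate}
\item Split exactly, as the paper does. Your interior bound must then hold with free boundary conditions on the cuts, including the Robin--Neumann corners. So the Fourier argument in $s$ needs a further localisation inside the constant-field region, where IMS is harmless because every piece carries the margin $c\mathcal{B}$.
\item Keep IMS, put the overlap inside $\{\alpha-\delta<|x|<\alpha\}$, and reuse the $x$-kinetic energy you discarded outside. Since $\chi_2\varphi$ vanishes for $|x|\le x_1$, Hardy's inequality gives $\int|\partial_x|\chi_2\varphi||^2\ge\frac14\int(|x|-x_1)^{-2}|\chi_2\varphi|^2$. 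Take a logarithmic profile $\chi_1=\cos\theta$, $\chi_2=\sin\theta$ with $\theta'\le\frac{1}{2(|x|-x_1)}$. Then the Hardy term absorbs $\int V\chi_2^2|\varphi|^2$, while $\int V\chi_1^2|\varphi|^2$ goes into the interior margin.
\end{enumerate}
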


\begin{proof}

The quadratic form (\ref{differential form}) can be rewritten as follows
\begin{eqnarray}\nonumber
q_{\beta, \Omega}^\mathcal{A}(\varphi)= \int_\Omega\left(\left|\frac{\partial\varphi}{\partial x}\right|^2+\left|i\frac{\partial\varphi}{\partial y}+ \mathcal{B} a^0_2\varphi\right|^2\right)\,d x\,d y- \beta\int_\Gamma |\varphi|^2\,d s\\\nonumber
= \int_{\Omega\cap \{x\in (-\alpha, \alpha)\}}\left(\left|\frac{\partial\varphi}{\partial x}\right|^2+\left|i\frac{\partial\varphi}{\partial y}+ \mathcal{B} a^0_2\varphi\right|^2\right)\,d x\,d y- \beta \int_{\Gamma\cap\{x\in (-\alpha, \alpha)\}} |\varphi|^2\,d s\\\label{qf}+  \int_{\Omega\cap\{x\in (-\infty, -\alpha)\cup (\alpha, \infty)\}}\left(\left|\frac{\partial\varphi}{\partial x}\right|^2+\left|i\frac{\partial\varphi}{\partial y}+ \mathcal{B} a^0_2\varphi\right|^2\right)\,d x\,d y- \beta \int_{\Gamma\cap\{x\in (-\infty, -\alpha)\cup (\alpha, \infty)\}} |\varphi|^2\,d s\,.
\end{eqnarray}

Let us estimate separately expressions 
$$\int_{\Omega\cap \{x\in (-\alpha, \alpha)\}}\left(\left|\frac{\partial\varphi}{\partial x}\right|^2+\left|i\frac{\partial\varphi}{\partial y}+ \mathcal{B} a^0_2\varphi\right|^2\right)\,d x\,d y- \beta \int_{\Gamma\cap\{x\in (-\alpha, \alpha)\}} |\varphi|^2\,d s$$ and 
$$\int_{\Omega\cap\{x\in (-\infty, -\alpha)\cup (\alpha, \infty)\}}\left(\left|\frac{\partial\varphi}{\partial x}\right|^2+\left|i\frac{\partial\varphi}{\partial y}+ \mathcal{B} a^0_2\varphi\right|^2\right)\,d x\,d y- \beta \int_{\Gamma\cap\{x\in (-\infty, -\alpha)\cup (\alpha, \infty)\}} |\varphi|^2\,d s.$$

We will start with the first one. We will construct a bounded curved strip $\tilde{\Omega}\subset \Omega\cap \{x\in (-\alpha, \alpha)\}$ such that for any function $g\in \mathcal{H}^{1}(\tilde{\Omega})$ and under the assumptions stated in the theorem

\begin{equation}\label{cond.}\boxed{
\int_{\tilde{\Omega}}\left(\left|\frac{\partial g}{\partial x}\right|^2+\left|i\frac{\partial g}{\partial y}+ \mathcal{B} a^0_2g\right|^2\right)\,d x\,d y- \beta \int_{\Gamma\cap\{x\in (-\alpha, \alpha)\}} |g|^2\,d s\ge 0\,.}
\end{equation}  

The above inequality together with non-negativeness of the expression \newline\noindent$\int_{\left(\Omega\cap \{x\in (-\alpha, \alpha)\}\right)\setminus\tilde{\Omega}}\left(\left|\frac{\partial g}{\partial x}\right|^2+\left|i\frac{\partial g}{\partial y}+ \mathcal{B} a^0_2g\right|^2\right)\,d x\,d y$ proves that 

\begin{equation}\label{first integral}
\int_{\Omega\cap \{x\in (-\alpha, \alpha)\}}\left(\left|\frac{\partial\varphi}{\partial x}\right|^2+\left|i\frac{\partial\varphi}{\partial y}+ \mathcal{B} a^0_2\varphi\right|^2\right)\,d x\,d y- \beta \int_{\Gamma\cap\{x\in (-\alpha, \alpha)\}} |\varphi|^2\,d s\ge 0\,.
\end{equation}

The construction of the domain $\tilde{\Omega}$ will be discussed in the next section.

Finally let us pass to expression 
$$\int_{\Omega\cap\{x\in (-\infty, -\alpha)\cup (\alpha, \infty)\}}\left(\left|\frac{\partial\varphi}{\partial x}\right|^2+\left|i\frac{\partial\varphi}{\partial y}+ \mathcal{B} a^0_2\varphi\right|^2\right)\,d x\,d y- \beta \int_{\Gamma\cap\{x\in (-\infty, -\alpha)\cup (\alpha, \infty)\}} |\varphi|^2\,d s.
$$

We have
\begin{eqnarray}\nonumber
\int_{\Omega\{x\in (-\infty, -\alpha)\cup (\alpha, \infty)\}}\left(\left|\frac{\partial\varphi}{\partial x}\right|^2+\left|i\frac{\partial\varphi}{\partial y}+ \mathcal{B} a^0_2\varphi\right|^2\right)\,d x\,d y- \beta \int_{\Gamma\cap\{x\in (-\infty, -\alpha)\cup (\alpha, \infty)\}} |\varphi|^2\,d s\\\nonumber =\int_{\{|x|> \alpha,\, y\ge 0\}}\left(\left|\frac{\partial \psi}{\partial x}\right|^2+ \left|\frac{\partial \psi}{\partial y}
+ \mathcal{B} a_2^0(x) \psi\right|^2\right)\,d x\,d y- \beta \int_{\{y= 0\}_{\{|x|> \alpha\}}} |\psi|^2\,d s\,\\\nonumber 
\ge \int_{\{|x|> \alpha,\, y\ge 0\}} \left|\frac{\partial \psi}{\partial y}
+ \mathcal{B} a_2^0(x) \psi\right|^2\,d x\,d y- \beta \int_{\{y= 0\}_{\{|x|> \alpha\}}} |\psi|^2\,d s\,.
\end{eqnarray}

In view of the gauge invariance the right-hand side of the above bound is estimated from below by the ground state eigenvalue of one dimensional operator Robin Laplacian with coefficient $\beta$ on half-line which is exactly $-\beta^2$. Combining this together with (\ref{first integral}) one establishes the theorem. \end{proof}

\section{Construction of domain $\tilde{\Omega}$}
\setcounter{equation}{0}

To describe the region $\tilde{\Omega}$ we employ the natural locally orthogonal coordinates, in analogy with the theory of quantum waveguides \cite{ES89} in such a way that
 \begin{equation} \label{Omega-2}
\tilde{\Omega} :=\{ (a(s)-u \dot b(s),b(s)+u \dot a(s)):\: s\in (s_1, s_2),\, 0< u <  f(s)\, \}\,,
 \end{equation}
where numbers $s_1< s_2$ satisfy that for all $s\in (s_1, s_2)$ the graph of curve $\Gamma$ belongs to $\Omega\cap \{x\in (-\alpha, \alpha)\}$ and outside of interval $(s_1, s_2)$ it coincides with the straight line $\{x, 0\}_{x\in \mathbb{R}}$. In view of our assumptions stated in Introduction one can choose a non-negative smooth function $f$ in such a way that domain $\tilde{\Omega}\subset\Omega\cap \{x\in (-\alpha, \alpha)\}$ and has $C^3$ smooth boundary.

Let us mention that in view of our construction the boundary of $\tilde{\Omega}$ contains the "perturbed" part of curve $\Gamma$. 

We shall restrict ourselves from the beginning by the requirement which we will use later
$$
 \|f \gamma\|_{L^\infty(s_1, s_2)}< 1\,,
$$
which guarantees that 
$$
1-  \|f \gamma\|_{L^\infty(s_1, s_2)}>0.
$$

Let us denote by $\tilde{H}^\mathcal{A}_{\beta, \tilde{\Omega}}$ the unique self-adjoint operator associated with the following closed and below bounded quadratic form 
\begin{equation}
\label{qtilde}
\tilde{q}_{\beta, \tilde{\Omega}}^\mathcal{A}(g)=\|(i\nabla+ \mathcal{A})g\|^2_{L^2(\tilde{\Omega})}- \beta\int_{\partial \tilde{\Omega}}|g|^2\,d s\,,
\end{equation}
where $\partial \tilde{\Omega}$ is the boundary of $\tilde{\Omega}$ and $\mathrm{Dom}(\tilde{q}_{\beta, \tilde{\Omega}}^\mathcal{A}) = \mathcal{H}^{1}(\tilde{\Omega})$.  

It is easy to notice that the following theorem establishes (\ref{cond.}):

\begin{theorem}\label{appendix}\setcounter{equation}{0}
Suppose the validity of the stated assumptions in Theorem \ref{thm:dia-mag}. There exist positive numbers $\beta_0$ and $\varepsilon_0(\alpha, \mathcal{B})$ such that for all $\beta>\beta_0$ and under the condition
$\mathrm{max}\left\{\|\gamma\|_\infty, \,\|\dot{\gamma}\|_\infty\right\}< \varepsilon_0(\mathcal{B})$
the operator $\tilde{H}^\mathcal{A}_{\beta, \tilde{\Omega}}$ is non-negative.
\end{theorem}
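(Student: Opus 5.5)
The plan is to reduce non-negativity of $\tilde q^{\mathcal A}_{\beta,\tilde\Omega}$ to a competition between two quantities. On one side is the lowest eigenvalue of the \emph{Neumann} magnetic Laplacian on $\tilde\Omega$, which grows linearly in $\mathcal B$. On the other is the cost of the boundary term, which a trace inequality controls at order $\beta^2$. Since $\mathcal B\sim\beta^\sigma$ with $\sigma\ge 2$, the magnetic energy should dominate for large $\beta$.

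\textbf{Step 1: trace inequality in curvilinear coordinates.} Using the parametrisation (\ref{Omega-2}), the Jacobian is $1-u\gamma(s)$, which lies in $[1-\|f\gamma\|_\infty,\,1+\|f\gamma\|_\infty]$. Apply the one-dimensional estimate $|v(0)|^2\le t\int_0^{\ell}|v'|^2\,du+\big(t^{-1}+\ell^{-1}\big)\int_0^\ell|v|^2\,du$ on each normal segment $u\in(0,f(s))$, with a cut-off near the ends $s_1,s_2$ where $f$ degenerates, and integrate in $s$. This should give, for $h\in\mathcal H^1(\tilde\Omega)$ real and $0<t<1$,
$$\int_{\partial\tilde\Omega}|h|^2\,ds\le t(1+C\varepsilon)\|\nabla h\|^2_{L^2(\tilde\Omega)}+\frac{C_{f}(1+C\varepsilon)}{t}\|h\|^2_{L^2(\tilde\Omega)},$$
where $\varepsilon=\max\{\|\gamma\|_\infty,\|\dot\gamma\|_\infty\}$. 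The boundary pieces of $\partial\tilde\Omega$ not lying on $\Gamma$ are handled in the same way, using the $C^3$ smoothness of $\partial\tilde\Omega$. We then take $h=|g|$ and use the diamagnetic inequality $\|\nabla|g|\|\le\|(i\nabla+\mathcal A)g\|$. With $t=1/(2\beta)$ this yields
$$\tilde q^{\mathcal A}_{\beta,\tilde\Omega}(g)\ge\Big(1-\tfrac{1+C\varepsilon}{2}\Big)\|(i\nabla+\mathcal A)g\|^2-2C_f(1+C\varepsilon)\beta^2\|g\|^2 .$$

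\textbf{Step 2: Neumann magnetic lower bound.} On $\tilde\Omega$ the field is constant, $\mathrm{curl}\,\mathcal A=\mathcal B$. The domain is bounded with $C^3$ boundary and its curvature is controlled by $\varepsilon$ and $f$. The Helffer--Morame asymptotics therefore give
$$\|(i\nabla+\mathcal A)g\|^2\ge\big(\Theta_0\mathcal B-C\kappa_{\max}\mathcal B^{1/2}-C'\mathcal B^{1/3}\big)\|g\|^2,\qquad \Theta_0\approx0.59,$$
uniformly for $\mathcal B$ large. Inserting this into Step 1 gives
$$\tilde q\ge\Big[\tfrac{1-C\varepsilon}{2}\,\Theta_0\,\mathcal B(1-o(1))-2C_f(1+C\varepsilon)\beta^2\Big]\|g\|^2 .$$

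\textbf{Step 3: conclusion.} For $\sigma>2$ the bracket is positive once $\beta>\beta_0$, since $\mathcal B\ge c_1\beta^\sigma$; the parameter $\varepsilon_0(\mathcal B)$ only needs to keep $C\varepsilon<1/2$ and keep the curvature term in Helffer--Morame subordinate.

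\textbf{Main obstacle.} I expect the borderline case $\sigma=2$ to be the hardest. There both terms are of order $\beta^2$, so the crude choice of $t$ and the constant $C_f$ matter. For this case I would first try to sharpen Step 1 to the optimal half-line constant: $\inf\{\int_0^\infty|v'|^2-\beta|v(0)|^2\}=-\beta^2$ on the unit-weighted half-line, with curvature corrections of order $\varepsilon$. I would then combine it with the Robin--magnetic half-plane model via a partition of unity at scale $\mathcal B^{-1/2}$ and a gauge change. That model has bottom $\mathcal B\,\Theta(\beta\mathcal B^{-1/2})$, with $\beta\mathcal B^{-1/2}\le c_1^{-1/2}$, and what needs to be verified is that $\Theta$ is positive at this argument; if it is not, an extra largeness condition on $c_1$ would be required. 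Localisation errors, of order $\mathcal B^{1/2}$ up to logarithmic factors, together with the Jacobian errors, are absorbed by taking $\beta_0$ large and $\varepsilon_0(\mathcal B)$ small.
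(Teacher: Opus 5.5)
For $\sigma>2$ your argument is correct, and it takes a genuinely different route from the paper's.

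\textbf{The paper's route.} It pulls everything back to the fixed parameter domain $\Omega_0$ and writes the form as $\breve{q}(h)-I(h)$. It bounds $I(h)$ by $\max\{|\gamma|,|\dot\gamma|\}$ times constants $\alpha_1,\alpha_2=\mathcal{O}(\mathcal{B})$. It then replaces $\breve{A}^0$ by the constant-field potential $(0,s)$, at a cost of order $\mathcal{B}^2(\|\dot b\|_\infty^2+\|\dot a a-s\|_\infty^2+\dots)$. Finally it quotes Kachmar's asymptotics for the \emph{Robin} magnetic Laplacian on $\Omega_0$, which treats the boundary term and the field together.

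\textbf{Your route.} You decouple the two effects. The trace inequality with $t=1/(2\beta)$ and the diamagnetic inequality turn the Robin term into half the magnetic kinetic energy plus $\mathcal{O}(\beta^2)\|g\|^2$. So only a Neumann bound $\lambda_1^N(\mathcal{B})\ge c\,\mathcal{B}$ is needed, and $\mathcal{B}\ge c_1\beta^\sigma\gg\beta^2$ finishes.

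\textbf{What each buys.} Your route avoids the change of variables and the Robin--magnetic asymptotics. Curvature enters only through the uniformity of the trace constant and of the Neumann bound over the admissible $\tilde\Omega$. Hence $\varepsilon_0$ can be taken independent of $\mathcal{B}$, whereas the paper's bookkeeping forces $\varepsilon_0(\mathcal{B})\to0$, roughly like $\mathcal{B}^{-2}$. Do state that uniformity explicitly: $\tilde\Omega$ moves with $\Gamma$, and a fixed-domain Helffer--Morame expansion does not supply it by itself. A crude localisation bound $\lambda_1^N\ge c\,\mathcal{B}$, with $c$ depending only on curvature bounds, suffices. The paper sidesteps this issue by working on the fixed $\Omega_0$. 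The price of your route is the factor-$2$ loss in the trace step, which makes it blind to $\sigma=2$.

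\textbf{The case $\sigma=2$.} Your suspicion is correct and decisive. The statement is false there unless $c_1$ is large, so this case cannot be completed, and the paper's proof is wrong at exactly this point. Near any smooth boundary point of $\tilde\Omega$, use boundary coordinates $(s,u)$ and a gauge in which the potential is purely tangential, equal to $-\mathcal{B}u(1+\mathcal{O}(u))$. The real trial function $\chi(s/\ell)e^{-\beta u}$ (fixed $\ell$, cut off in $u$) has Rayleigh quotient
\[
-\beta^2+\frac{\mathcal{B}^2}{2\beta^2}\bigl(1+\mathcal{O}(\beta^{-1})\bigr)+\mathcal{O}(\beta).
\]
For $\mathcal{B}=c_1\beta^2$ this equals $\beta^2(\tfrac{c_1^2}{2}-1)+\mathcal{O}(\beta)$, which is negative when $c_1<\sqrt{2}$ and $\beta$ is large, however small $\varepsilon_0$ is. In your notation, $\Theta(m)\le -m^2+\frac{1}{2m^2}<0$ for $m>2^{-1/4}$.

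\textbf{Where the paper goes wrong.} Its constant $\kappa=\max\{\Theta_0/2,\Theta_0/(2\sqrt{\tilde c_2})\}$ reads Kachmar's borderline term $b\,\Theta_0(\tilde\beta b^{-1/2})$ as the positive product $\Theta_0\cdot\tilde\beta b^{-1/2}$. In fact it is $b$ times the Robin half-plane energy at parameter $\tilde\beta b^{-1/2}$, and that energy changes sign. A correct $\sigma=2$ statement needs $c_1>m_*^{-2}$, where $\Theta(m_*)=0$, and even more with the paper's doubled $\tilde\beta$.

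In that regime your half-plane plan is the right one, with one correction. IMS localisation at scale $\mathcal{B}^{-1/2}$ costs $\mathcal{O}(\mathcal{B})$, not $\mathcal{O}(\mathcal{B}^{1/2})$. You would need a coarser scale, such as $\mathcal{B}^{-3/8}$, to keep the errors $o(\mathcal{B})$.
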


\section{Proof of Theorem\,(\ref{appendix})}
\setcounter{equation}{0}

We define the unitary operator
\begin{equation} \label{U1}
U : L^2(\tilde{\Omega}) \to L^2(\Omega_0), \quad \Omega_0= \{(s, u)_{s\in (s_1, s_2),\,u\in \left(0, f(s)\right)}\}, 
\end{equation}
which for any $g\in L^2(\tilde{\Omega})$ acts as
\begin{equation} \label{U2}
h(s, u):=(U g)(s, u) = \sqrt{1 + u \gamma(s)}\, g(a(s) - u \dot{b}(s), b(s) + u \dot{a}(s)).
\end{equation}

Let $\Gamma_-= \{u= 0)\}_{s\in (s_1, s_2)},\,\Gamma_+= \{u= f(s))\}_{s\in (s_1, s_2)},\, \Gamma_1=  \{s= s_1\}_{u\in (0, f(s))}$ and $\Gamma_2= \{s= s_2\}_{u\in (0, f(s))}$. Using the indentities (\ref{ab})- (\ref{id}) we can check that the Jacobian
\begin{equation}\label{part.}
\frac{\partial(x, y)}{\partial(s, u)}= 1+u \gamma(s)\,,
\end{equation}
and
\begin{eqnarray}\nonumber
\frac{\partial g}{\partial x} = (1 +u \gamma)^{-1} \left(\dot{a}\frac{\partial }{\partial s} - (\dot{b} +
u \ddot{a}) \frac{\partial}{\partial u}\right)\left(\frac{h}{\sqrt{1 + u
\gamma}} \right)\,, \\\nonumber \frac{\partial g}{\partial y} = (1 + u \gamma)^{-1}
\left( \dot{b} \frac{\partial}{\partial s} + (\dot{a}- u \ddot{b})  \frac{\partial}{\partial u}\right)\left(\frac{h}{\sqrt{1 + u
\gamma}} \right)\,,\\\nonumber\int_{\Gamma_-}|g|^2\,d s= \int_{\{u= 0\}_{s\in (s_1, s_2)}}|h|^2\,d s\,,
\\\nonumber\int_{\Gamma_+}|g|^2\,d s= \int_{\{u= f(s)\}_{s\in (s_1, s_2)}}\frac{1}{1+ f \gamma} \sqrt{\frac{(1+ f \gamma)^2+ \dot{f}^2}{1+ \dot{f}^2}}|h|^2\,d s\,,
\\\nonumber \int_{\Gamma_1}|g|^2\,d s= \int_{\{s= s_1\}_{u\in (0, f(s_1))}} |h|^2\,d u\,,\\\label{part.1} \int_{\Gamma_2}|g|^2\,d s= \int_{\{s= s_2\}_{u\in (0, f(s_2))}} |h|^2\,d u\,.
\end{eqnarray}

Then, rewriting $a_2^0$ in coordinates $(s, u)$ and using the notation
\begin{eqnarray} \label{tildeA}
\tilde {\mathcal{A}}_0(s, u) = \left( 0,  \tilde{a_2}^0(s, u)\right)\,,\\\label{tildeAA}  \tilde{a_2}^0(s, u)= a_2^0\left(a(s) - u \dot{b}(s), b(s) +u \dot{a}(s)\right)
\end{eqnarray}
we perform the quadratic form (\ref{qtilde}) corresponding to operator $\tilde{H}_{\beta, \tilde{\Omega}}^{\mathcal{A}}(h),\, h\in \mathcal{H}^1(\tilde{\Omega})$ as follows

\begin{eqnarray} \nonumber
\tilde{q}_{\beta, \tilde{\Omega}}^\mathcal{A}(g)= \int_{\Omega_0} \left( \frac{1}{(1 + u \gamma)^2}\left|\left( \dot{a}
\frac{\partial}{\partial s} - (\dot{b} + u \ddot{a})\frac{\partial}{\partial u}\right) \left( \frac{h} {\sqrt{1 + u
\gamma}} \right) \right|^2 \right. \nonumber  \\\nonumber
 \left. \quad + \left| \left(\frac{i}{1+u \gamma} \left( \dot{b} \frac{\partial}{\partial s}
+ (\dot{a} - u \ddot{b}) \frac{\partial}{\partial u}\right) +
\mathcal{B} \tilde a_2^0 \right) \left( \frac{h}{\sqrt{1 + u \gamma}} \right)\right|^2 \right) (1 + u \gamma) \,d s\,d u- \beta \int_{\{u= 0\}_{s\in (s_1, s_2) }} |h|^2\,d s\\\nonumber- \beta \int_{\{u= f(s)\}_{s\in (s_1, s_2)}}\frac{1}{1+ f \gamma} \sqrt{\frac{(1+ f \gamma)^2+ \dot{f}^2}{1+ \dot{f}^2}} |h|^2\,d s- \beta \int_{\{s= s_1\}_{u\in (0, f(s_1))}} |h|^2\,d u- \beta \int_{\{s= s_2\}_{u\in (0, f(s_2))}} |h|^2\,d u\,.
\end{eqnarray}

Using (\ref{adot}) and (\ref{id}) the above expression performs
 \begin{gather}\nonumber
\tilde{q}_{\beta, \tilde{\Omega}}^\mathcal{A}(g)= \int_{\Omega_0} \biggl(\frac{1}{(1+ u\gamma)^2}\left|\frac
{\partial h}{\partial s}\right|^2 + i \mathcal{B}\frac{\dot{b}\tilde{a_2}^0}{1 + u \gamma} \left(\frac{\partial h}{\partial s} \overline
{h} - h \frac{\partial \overline{h}}{\partial s}\right) + \left|\frac{\partial h}{\partial u}\right|^2\\ \nonumber
+ i \mathcal{B}\frac{(\dot{a} - u \ddot{b})
\tilde{a_2}^0}{1 + u \gamma} \left(\frac{\partial h}{\partial u} \overline{h}
-\psi  {\frac{\partial\overline{h}}{\partial u}} \right) \\\nonumber
- \frac{u \dot{\gamma} }{2(1 + u \gamma)^3}\left(h
{\frac{\partial\overline{h}}{\partial s}} + \frac{\partial h}{\partial s} \overline{h}\right) - \frac{\gamma}{2(1 + u \gamma)} \left(h{\frac{\partial \overline{h}}{\partial u}} +
\frac{\partial h}{\partial u} \overline{h}\right)\\\nonumber
+ \left( \frac{u^2  \left(\dot{\gamma} \right)^2}
{4(1 + u \gamma)^4} + \frac{ \gamma^2}{4(1 + u
\gamma)^2}  + \mathcal{B}^2(\tilde{a_2}^0)^2\right) |h|^2\biggr)
\,d s\,d u\\\nonumber- \beta \int_{\{u= 0\}_{s\in(s_1, s_2)}} |h|^2\,d s- \beta \int_{\{u= f(s)\}_{s\in(s_1, s_2)}}\frac{1}{1+ f \gamma} \sqrt{\frac{(1+ f \gamma)^2+ \dot{f}^2}{1+ \dot{f}^2}} |h|^2\,d s\\\nonumber- \beta \int_{\{s= s_1\}_{u\in (0, f(s_1))}} |h|^2\,d u- \beta \int_{\{s= s_2\}_{u\in (0, f(s_2))}} |h|^2\,d u\,.
\end{gather}

We write the right-hand side of the preceding expression as a perturbation of the form $\tilde{q}_{\beta, \tilde{\Omega}}^A$, as follows:

\begin{equation}
\label{pert.}
\tilde{q}_{\beta, \tilde{\Omega}}^\mathcal{A}(g)= \breve{q}_{\beta, \Omega_0}^\mathcal{A}(h)- I(h)\,,
\end{equation}
where
\begin{eqnarray}\nonumber
\breve{q}_{\beta, \Omega_0}^\mathcal{A}(h)= \int_{\Omega_0} \left|i \frac{\partial h}{\partial s} + \mathcal{B} \dot{b} \tilde{a_2}^0 h\right|^2 + \left|i \frac{\partial h}{\partial u} +
 \mathcal{B} \dot{a} \tilde{a_2}^0 h\right|^2 \,d s\, d u\\\nonumber- \beta \int_{\{u= 0\}_{s\in (s_1, s_2)}}
|h|^2\,d s- \beta \int_{\{u= f(s)\}_{s\in (s_1, s_2)}}\frac{1}{1+ f \gamma} \sqrt{\frac{(1+ f \gamma)^2+ \dot{f}^2}{1+ \dot{f}^2}} |h|^2\,d s\\\label{q0}- \beta \int_{\{s= s_1\}_{u\in (0), f(s_1))}} |h|^2\,d u- \beta \int_{\{s= s_2\}_{u\in (0, f(s_2))}} |h|^2\,d u\,,\\
\nonumber
I(h)= \int_{\Omega_0} \biggl( \frac{2 u \gamma + u^2
\gamma^2}{(1 + u \gamma)^2} \left|\frac{\partial h}{\partial s}\right|^2 + i \frac{\mathcal{B}u \gamma \dot{b} \tilde{a_2}^0}{1+ u\gamma} \left(\frac{\partial h}{\partial s} \overline h
- h{\frac{\partial \overline{h}}{\partial s}}\right)\\\label{Ih}+ i \frac{\mathcal{B} u \tilde{a_2}^0}{1+ u \gamma} (
\gamma \dot{a}+ \ddot{b} ) \left( \frac{\partial h}{\partial u} \overline h -
h {\frac{\partial \overline{h}}{\partial u}} \right)  \\\nonumber
+ \frac{u  \dot{\gamma}}{2(1 + u \gamma)^3}
\left(h{\frac{\partial \overline{h}}{\partial s}}+ \frac{\partial h}{\partial s} \overline h
\right) + \frac{\gamma}{2(1 + u \gamma)} \left(h
{\frac{\partial \overline{h}}{\partial u}} + \frac{\partial h}{\partial u} \overline h \right)
- \left( \frac{u^2 \left( \dot{\gamma} \right)^2}
{4(1 + u \gamma)^4} + \frac{ \gamma^2} {4(1 + u
\gamma)^2} \right) |h|^2 \biggr) \,d s \,d u\,.
\end{eqnarray}

We now estimate $I(h)$. Since $u\in (0, f(s))$, equations (\ref{ab}) and (\ref{id}) yield
\begin{gather}\nonumber
| I(h)|\le \|f\|_\infty \left( \frac{2 +  \|f \gamma\|_\infty
}{(1 - \|f \gamma\|_\infty)^2} \right)\int_{\Omega_0}  |\gamma|\left|\frac{\partial h}{\partial s}\right|^2\,d s\,d u\\\nonumber + 2 \|f\|_\infty \left(\frac{\mathcal{B} \|\tilde{a_2}^0\|_\infty }{1 -  \|f \gamma\|_\infty}\right) \int_{\Omega_0} |\gamma| \left|\frac{\partial h}{\partial s}\right| 
|\psi|\,d s\,d u
+ 4 \|f\|_\infty \left(\frac{\mathcal{B} \|\tilde{a_2}^0\|_\infty}{1- \|f \gamma\|_\infty}\right)\int_{\Omega_0} |\gamma| \left|\frac{\partial h}{\partial u}\right| |h|\,d s\,d u 
\\\nonumber
+ \left(\frac{\|f\|_\infty}{(1 - \|f \gamma\|_\infty)^3} \right) \int_{\Omega_0} 
|\dot{\gamma}|\left|\frac{h}{\partial s}\right| |h|\,d s\,d u+ \left(\frac{1}{1 - \|f \gamma\|_\infty}\right) \int_{\Omega_0} 
|\gamma|\left|
\frac{\partial h}{\partial u}\right| |h|\,d s\,d u
\\\nonumber\le  \|f\|_\infty
\left(\frac{2 + \|f \gamma\|_\infty}{(1 - \|f \gamma\|_\infty)^2}+ \frac{\mathcal{B} \|\tilde{a_2}^0\|_\infty}{(1- \|f \gamma\|_\infty)}+ 
\frac{1}{2(1 - \|f \gamma\|_\infty)^3} \right)\times\\\nonumber\times \int_{\Omega_0}\mathrm{max}\{|\gamma|, |\dot{\gamma}|\}\left|\frac{\partial h}{\partial s}\right|^2\,d s\,d u\\\nonumber
+ \frac{1}{1 -  \|f \gamma\|_\infty} \left(2 \mathcal{B} \|\tilde{a_2}^0\|_\infty \|f\|_\infty+ \frac{1}{2}\right)
\int_{\Omega_0}|\gamma|
\left|\frac{\partial h}{\partial u}\right|^2\,d s\,d u\\\nonumber+
\left(  \|f\|_\infty \left( \frac{3 \mathcal{B} \|\tilde{a_2}^0\|_\infty}{(1- \|f \gamma\|_\infty)} +\frac{1}{2(1 - \|f \gamma\|_\infty)^3}\right)+ \frac{1}{2(1 - \|f \gamma\|_\infty)}\right)\times \\\nonumber\times
\int_{\Omega_0}\mathrm{max}\{|\gamma|, |\dot{\gamma}|\} |h|^2\,d s\,d u\,. 
\end{gather}

Let us denote
\begin{equation}\label{alpha1}
\alpha_1:=\mathrm{max}\{\tau_1, \,\tau_2\}\,,
\end{equation}
where \begin{eqnarray*} \tau_1=   \|f\|_\infty
\left(\frac{2 +  \|f \gamma\|_\infty}{(1 -  \|f \gamma\|_\infty)^2}+ \frac{\mathcal{B} \|\tilde{a_2}^0\|_\infty}{(1- \|f \gamma\|_\infty)
}+ 
\frac{1}{2(1 - \|f \gamma\|_\infty)^3} \right) \,,
\\
\tau_2= \frac{1}{1 - \|f \gamma\|_\infty} \left( 2 \mathcal{B} \|\tilde{a_2}^0\|_\infty \|f\|_\infty+ \frac{1}{2}\right)
\,,
\end{eqnarray*}
and
\begin{equation}\label{alpha2}
\alpha_2:= 
 \|f\|_\infty \left( \frac{3 \mathcal{B} \|\tilde{a_2}^0\|_\infty}{(1- \|f\|_\infty \gamma\|_\infty)} +\frac{1}{2(1 - \|f \gamma\|_\infty)^3}\right)+ \frac{1}{2(1 - \|f \gamma\|_\infty)}\,.
\end{equation}

Hence the above inequality implies 
$$ |I(h)|\le \alpha_1\int_{\Omega_0}\mathrm{max}\{|\gamma|, |\dot{\gamma}|\} |\nabla h|^2\,d s\,d u+
 \alpha_2\int_{\Omega_0}\mathrm{max}\{|\gamma|, |\dot{\gamma}|\} |h|^2\,d s\,d u\,.
$$

Combining (\ref{pert.}) with the preceding estimate, we obtain
\begin{equation}
\label{est..}
\tilde{q}_{\beta, \tilde{\Omega}}^\mathcal{A}(g)\ge \breve{q}_{\beta, \Omega_0}^A(h)- \alpha_1\int_{\Omega_0}\mathrm{max}\{|\gamma|, |\dot{\gamma}|\}
|\nabla h|^2\,d s\,d u-
 \alpha_2\int_{\Omega_0}\mathrm{max}\{|\gamma|, |\dot{\gamma}|\} |h|^2\,d s\,d u\,.
\end{equation}

For any magnetic potential $\hat{A}= (\hat{a_1}, \hat{a_2})$, the following pointwise inequality holds:
\begin{equation}\label{aux.}
\left|\nabla h\right|^2\le 2\left|i\nabla h+ \hat{A}h\right|^2 + 2\left(\hat{a_1}^2+ \hat{a_2}^2\right)
|h|^2\,, \quad h\in \mathcal{H}^1(\Omega_0)\,.
\end{equation}

In view of this fact, (\ref{est..}) and expression for $\breve{q}_{\beta, \Omega_0}^{\mathcal{A}}$ (\ref{q0}) we get
\begin{eqnarray}\nonumber
\tilde{q}_{\beta, \tilde{\Omega}}^\mathcal{A}(g)\ge \int_{\Omega_0} \left(1- 2\alpha_1\mathrm{max}\{|\gamma|, |\dot{\gamma}|\}\right)\left|i \nabla h+ \mathcal{B}\breve{A}^0 h\right|^2\,d s\,d u\\\nonumber- 
\int_{\Omega_0}\mathrm{max}\{|\gamma|, |\dot{\gamma}|\}\left(2\alpha_1 \mathcal{B}^2\|\tilde{a_2}^0\|_\infty^2+ \alpha_2 \right)|h|^2\,d s\,d u\\\nonumber- \beta \int_{\{u= 0\}_{s\in (s_1, s_2)}}
|h|^2\,d s- \beta \int_{\{u= f(s)\}_{s\in (s_1, s_2)}}\frac{1}{1+ f \gamma} \sqrt{\frac{(1+ f \gamma)^2+ \dot{f}^2}{1+ \dot{f}^2}} |h|^2\,d s\\\label{bq}- \beta \int_{\{s= s_1\}_{u\in (0, f(s_1))}} |h|^2\,d u- \beta \int_{\{s= s_2\}_{u\in (0, f(s_2))}} |h|^2\,d u\,,\end{eqnarray}
where
\begin{equation}\label{breve} \breve{A}^0= (\dot{b} \tilde{a_2}^0,  \dot{a} \tilde{a_2}^0)\,.
\end{equation}
 
In view of (\ref{alpha1}) and (\ref{alpha2}) it can be checked that $\alpha_1, \alpha_2= \mathcal{O}(\mathcal{B})$ for $\mathcal{B}$ large enough. Then by choosing $\gamma$ such that
\begin{equation}\label{first cond.}
\mathrm{max}\{|\gamma|, |\dot{\gamma}|\} \alpha_1< \frac{1}{4}
\end{equation}
we have 

\begin{eqnarray}\nonumber
\tilde{q}_{\beta, \tilde{\Omega}}^{\mathcal{A}}(g)\ge \frac{1}{2}\int_{\Omega_0}\left|i \nabla h+ \mathcal{B}\breve{A}^0 h\right|^2\,d s\,d u\\\nonumber- 
\int_{\Omega_0}\mathrm{max}\{|\gamma|, |\dot{\gamma}|\}\left(2\alpha_1 \mathcal{B}^2 \|\tilde{a_2}^0\|_\infty^2+ \alpha_2 \right)|h|^2\,d s\,d u\\\nonumber- \beta \int_{\{u= 0\}_{s\in (s_1, s_2)}}
|h|^2\,d s- \beta \int_{\{u= f(s)\}_{s\in (s_1, s_2)}}\frac{1}{1+ f \gamma} \sqrt{\frac{(1+ f \gamma)^2+ \dot{f}^2}{1+ \dot{f}^2}} |h|^2\,d s\\\label{new.op}- \beta \int_{\{s= s_1\}_{u\in (0, f(s_1))}} |h|^2\,d u- \beta \int_{\{s= s_2\}_{u\in (0, f(s_2))}} |h|^2\,d u\,.
\end{eqnarray}

In view of (\ref{A0}), (\ref{tildeAA}) and (\ref{breve}) we have that $\breve{A}^0=(\dot{b}(a -u \dot{b}), \dot{a}(a- u \dot{b}))$ on $\Omega_0$. 

Further we need to introduce the following magnetic potential $A_1^0= (0, s)$ and to mention the simple inequality  
\begin{eqnarray*}
\int_{\Omega_0}|\breve{A}^0- A_1^0|^2|h|^2\,d s\,d u
\\\le 2\left(\|\dot{b}\|_\infty^2 (\|a\|_\infty^2+ \|f \dot{b}\|_\infty^2)+\|\dot{a} a- s\|_\infty^2+ \|f \dot{a} \|_\infty^2 \|\dot{b}\|_\infty^2\right)
\int_{\Omega_0}|h|^2\,d s\,d u\,,
\end{eqnarray*}
from which immediately implies that

\begin{eqnarray}\nonumber
\int_{\Omega_0}\left|i \nabla h+ \mathcal{B}\breve{A}^0 h\right|^2\,d s\,d u\ge \frac{1}{2}\int_{\Omega_0}\left|i \nabla h+ \mathcal{B}A_1^0 h\right|^2\,d s\,d u\\\nonumber- 2\mathcal{B}^2 \left(\|\dot{b}\|_\infty^2 (\|a\|_\infty^2+ \|f \dot{b}\|_\infty^2)+\|\dot{a} a- s\|_\infty^2+ \|f \dot{a} \|_\infty^2 \|\dot{b}\|_\infty^2\right)
\int_{\Omega_0}|h|^2\,d s\,d u\,.
\end{eqnarray}

Hence (\ref{bq}) implies
\begin{eqnarray}\nonumber
\tilde{q}_{\beta, \tilde{\Omega}}^{\mathcal{A}}(g)\ge \frac{1}{2}\int_{\Omega_0}\left|i \nabla h+ \mathcal{B}A_1^0 h\right|^2\,d s\,d u\\\nonumber - \frac{\beta \sqrt{(1+ \|f \gamma\|_\infty)^2+ \|\dot{f}\|_\infty^2}}{1- \|f \gamma\|} \biggl(\int_{\{u= 0\}_{s\in (s_1, s_2)}}
|h|^2\,d s-  \int_{\{u= f(s)\}_{s\in (s_1, s_2)}} |h|^2\,d s\\\nonumber- \int_{\{s= s_1\}_{u\in (0, f(s_1))}} |h|^2\,d u- \int_{\{s= s_2\}_{u\in (0, f(s_2))}} |h|^2\,d u\biggr)\\\nonumber- 
\int_{\Omega_0}\mathrm{max}\{|\gamma|, |\dot{\gamma}|\}\left(2\alpha_1 \mathcal{B}^2 \|\tilde{a_2}^0\|_\infty^2+ \alpha_2 \right)|h|^2\,d s\,d u\\\label{fin}- 2\mathcal{B}^2 \left(\|\dot{b}\|_\infty^2 (\|a\|_\infty^2+ \|f \dot{b}\|_\infty^2)+\|\dot{a} a- s\|_\infty^2+ \|f \dot{a} \|_\infty^2 \|\dot{b}\|_\infty^2\right)
\int_{\Omega_0}|h|^2\,d s\,d u\,.
\end{eqnarray}

To continue the proof we need the lower bound for the magnetic Robin Laplacian eigenvalues. Let $\omega\subset\mathbb{R}^2$ be an open set. We will assume that the boundary of $\omega$ is $C^3$  smooth, compact and consists  of a finite number of connected
components. Assume that $\tilde{\beta} \in \mathbb{R},\, \mathcal{B}\ge 0$ and let $\tilde{H}^\omega_{\tilde{\beta}}(\mathcal{B})$ be the self-adjoint operator in $L^2(\omega)$ corresponding to quadratic form
$$
\int_\omega\left|i \nabla v+ \mathcal{B} \overline{A}_0 v\right|^2\,d x\,d y- \tilde{\beta}\int_{\partial \omega}|v|^2\,d s\,,v\in \mathcal{H}^1(\omega)\,,
$$
where $\overline{A}_0= (0, x)$ is the magnetic potential corresponding to constant magnetic field: $\mathrm{curl}(\overline{A}_0)= 1$.
The following theorem takes place-- \cite{K16}:

\begin{theorem}\label{lemma1} Let $\alpha\in \left(\frac{1}{2}, 1\right)$, $0< \tilde{c}_1 <\tilde{c}_2$ and $\tilde{\beta}_0> 0$. Suppose that
$$\tilde{\beta}> \tilde{\beta}_0\quad\text{and}\quad \tilde{c}_1 \tilde{\beta}^{\frac{1}{1-\alpha}}\le b\le \tilde{c}_2 \tilde{\beta}^{\frac{1}{1-\alpha}}\,.$$ Let $\mu_1(\tilde{\beta}, b)$ be the ground state eigenvalue of  $\tilde{H}_{\tilde{\beta}}(b)$. Then 
\begin{enumerate}
\item If $\alpha> \frac{1}{2}$,  the ground state eigenvalue satisfies, as $\tilde{\beta}\to\infty$,
$$\tilde\mu_1(\tilde{\beta}, b)=\Theta_0 b+ b o(1)\,.$$
where $\Theta_0\in(0,1)$ is a universal constant.
\item If $\alpha= \frac{1}{2}$,  the ground state eigenvalue satisfies, as $\tilde{\beta}\to\infty$,
$$\tilde\mu_1(\tilde{\beta}, b)=b \Theta_0 (\tilde{\beta} b^{-1/2})+ b o(1)\,.$$
\end{enumerate}
\end{theorem}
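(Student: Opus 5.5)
This is the result of Kachmar \cite{K16}, quoted here to be applied to the domain $\Omega_0$ with $\tilde\beta$ comparable to $\beta$ and $b=\mathcal{B}/2$. If one wanted to re-derive it rather than cite it, I would follow the standard semiclassical scheme for magnetic Robin problems. First I rescale to the semiclassical parameter $h=b^{-1}$. The form becomes $b\big(\int_\omega |ih\nabla v+\overline A_0 v|^2 - \tilde\beta b^{-1}\int_{\partial\omega}|v|^2\big)$ up to normalisation. In the regime $b\sim\tilde\beta^{1/(1-\alpha)}$ the effective Robin strength relative to the natural boundary scale $b^{1/2}$ is $\tilde\beta b^{-1/2}\sim b^{1/2-\alpha}$. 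This quantity tends to $0$ when $\alpha>\frac12$ and stays bounded, comparable to a constant $\gamma_0$, when $\alpha=\frac12$. The dichotomy in the statement comes from exactly this scaling. For case (2) one may assume, after passing to a subsequence, that $\tilde\beta b^{-1/2}\to\gamma_0$.

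The proof consists of matching upper and lower bounds, both localised at the boundary scale $b^{-1/2}$.

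\textbf{Lower bound.} I take an IMS partition of unity with cells of size $b^{-\rho}$, $\frac13<\rho<\frac12$. The localisation error is $O(b^{2\rho})=b\,o(1)$.
\begin{itemize}
\item On interior cells I use $\int|(i\nabla+b\overline A_0)v|^2\ge b\int|v|^2$.
\item On boundary cells I straighten $\partial\omega$ in tubular coordinates, using the $C^3$ regularity. The metric and gauge corrections are $O(b^{1-\rho})=b\,o(1)$. I then compare with the half-plane model
\[
-\partial_t^2+(\tau-t)^2 \quad\text{on } (0,\infty),\qquad v'(0)=-\gamma v(0),
\]
after scaling by $b^{1/2}$, where $\gamma=\tilde\beta b^{-1/2}$. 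Its bottom of spectrum is $\Theta_0(\gamma)=\inf_\tau\mu(\gamma,\tau)$, and $\Theta_0(0)=\Theta_0$ is the de Gennes constant.
\end{itemize}

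\textbf{Upper bound.} I use the quasimode $b^{1/4}\chi(s)\,\varphi_{\gamma}(b^{1/2}t)\,e^{-i\tau_0 b^{1/2}s}$ near an arbitrary boundary point, with $\varphi_\gamma$ the model ground state. A direct computation gives energy $b\Theta_0(\gamma)+b\,o(1)$.

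The main obstacle is the continuity of $\gamma\mapsto\Theta_0(\gamma)$ at $\gamma=0$ and locally uniformly in $\gamma$. This is needed so that $\Theta_0(\tilde\beta b^{-1/2})=\Theta_0+o(1)$ in case (1), and so that the remainder is uniform in case (2). To get it, I would use analytic perturbation of the de Gennes family $\mu(\gamma,\tau)$ in $\gamma$. Uniformity in $\tau$ comes from $\mu(\gamma,\tau)\to\infty$ as $|\tau|\to\infty$, which confines the infimum to a compact set of $\tau$.

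For the present paper I will not redo this argument. I will simply invoke \cite{K16} and apply it with $\omega=\Omega_0$. The factor $\frac12$ in front of the kinetic term in (\ref{fin}) gets absorbed by setting $\tilde\beta\approx 2\beta$ and $b=\mathcal{B}/2$. The condition $\sigma\ge2$ then corresponds to $\alpha=1-1/\sigma\ge\frac12$.
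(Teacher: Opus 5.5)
Your proposal agrees with the paper, which gives no proof of this statement and simply quotes it from Kachmar \cite{K16}; your outline (effective Robin parameter $\tilde\beta b^{-1/2}\sim b^{\frac12-\alpha}$, IMS localisation, comparison with the Robin de Gennes model, boundary quasimode) is the standard route to that result. Two small slips in the optional parts: first, $\mu(\gamma,\tau)$ tends to $1$, not $\infty$, as $\tau\to+\infty$, so the minimising $\tau$ stay in a compact set because $\Theta_0(\gamma)\le\Theta_0<1$ for $\gamma\ge 0$; second, factoring $\frac12$ out of (\ref{fin}) gives $b=\mathcal{B}$ with $\tilde\beta$ equal to twice the Robin coefficient there (the paper's choice, consistent with its $\tilde c_1,\tilde c_2$), not $b=\mathcal{B}/2$, although this only changes constants.
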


Let us return to (\ref{fin}). Since $\Omega_0$ is $C^3$- smooth domain then with the notations $\tilde{\beta}=\frac{2\beta \sqrt{(1+ \|f \gamma\|_\infty)^2+ \|\dot{f}\|_\infty^2}}{1-\|f \gamma\|_\infty}$\,,\newline \noindent $\tilde{c}_1=\frac{c_1 (1-\|f \gamma\|_\infty)^2}{4 ((1+ \|f \gamma\|_\infty)^2+ \|\dot{f}\|_\infty^2)}\,,\, \tilde{c}_2=\frac{c_2 (1-\|f \gamma\|_\infty)^2}{4 ((1+ \|f \gamma\|_\infty)^2+ \|\dot{f}\|_\infty^2)}$ and $\omega= \Omega_0$ we are able to use the statement of Theorem \ref{lemma1}. Hence for $\beta$ large enough
\begin{eqnarray}\nonumber
\tilde{q}_{\beta, \tilde{\Omega}}^{\mathcal{A}}(g)\ge \frac{\kappa}{2} \mathcal{B} \int_{\Omega_0}|h|^2\,d s\,d u\\\nonumber- 
\int_{\Omega_0}\mathrm{max}\{|\gamma|, |\dot{\gamma}|\}\left(2\alpha_1 \mathcal{B}^2 \|\tilde{a_2}^0\|_\infty^2+ \alpha_2 \right)|h|^2\,d s\,d u\\\nonumber- 2\mathcal{B}^2 \left(\|\dot{b}\|_\infty^2 (\|a\|_\infty^2+ \|f \dot{b}\|_\infty^2)+\|\dot{a} a- s\|_\infty^2+ \|f \dot{a} \|_\infty^2 \|\dot{b}\|_\infty^2\right)
\int_{\Omega_0}|h|^2\,d s\,d u\,,
\end{eqnarray}
where $\kappa= \mathrm{max}\left\{\frac{ \Theta_0}{2},\, \frac{ \Theta_0}{2 \sqrt{\tilde{c}_2}}\right\}$.

The above inequality establishes that if $\|\dot{b}\|_\infty,\, \|\dot{a} a- s\|_\infty$ and $\|\gamma\|_\infty, \,\|\dot{\gamma}\|_\infty$ are small enough such that

\begin{eqnarray}\nonumber
2\mathcal{B}^2 \left(\|\dot{b}\|_\infty^2 (\|a\|_\infty^2+ \|f \dot{b}\|_\infty^2)+\|\dot{a} a- s\|_\infty^2+ \|f \dot{a} \|_\infty^2 \|\dot{b}\|_\infty^2\right)< \frac{\kappa \mathcal{B}}{4}\,,
\\\label{1case}
\mathrm{max}\{|\gamma|, |\dot{\gamma}|\}\left(2\alpha_1 \mathcal{B}^2 \|\tilde{a_2}^0\|_\infty^2+ \alpha_2 \right)<  \frac{\kappa \mathcal{B}}{4}
\end{eqnarray}
then
$$ \tilde{q}_{\beta, \tilde{\Omega}}^{\mathcal{A}}(g) \ge 0\,.
$$

Employing (\ref{reconstr.}) one can easily verify that the quantities $\|\dot{b}\|_\infty$ and $\|\dot{a} a- s\|_\infty$ can be made small enough in order to guarantee the validity of (\ref{1case}) by choosing $L^\infty$ norm of $\gamma$ bounded by a sufficiently small number depending on $\mathcal{B}$. The same is true for the $L^\infty$ norms of $\gamma$ and $\dot{\gamma}$ in the necessary condition (\ref{first cond.}) obtained previously. Thus, the proof of the theorem is complete with sufficiently large $\beta_0$ and sufficiently small  $\varepsilon_0(\mathcal{B})$.

\section{Stability of the essential spectrum}\label{stability}
\setcounter{equation}{0}

In this section we show that the local perturbation of the curve $\Gamma$ does not change the essential spectrum. The following theorem takes place:

\begin{theorem}
Suppose assumptions stated in section Introduction. Then the essential spectrum of the operator $H^{\mathcal{A}}_{\beta, \Omega}$ coincides with the half-line $\left[-\beta^2, \infty\right)$.
\end{theorem}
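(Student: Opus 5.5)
We prove the two inclusions $[-\beta^2,\infty)\subset\sigma_{\mathrm{ess}}(H^{\mathcal{A}}_{\beta,\Omega})$ and $\sigma_{\mathrm{ess}}(H^{\mathcal{A}}_{\beta,\Omega})\subset[-\beta^2,\infty)$ separately.

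\emph{Inclusion $\supset$.} Here we use Weyl singular sequences supported far from the perturbation. By assumptions (a)--(b) there is $R>\alpha$ such that on $\{x>R\}$ the boundary $\Gamma$ is the line $\{y=0\}$ and $\mathcal{A}$ is supported in a compact set outside this region. Our choice of $a_2^0$ is only said to vanish at infinity. If it is not compactly supported, we first replace $\mathcal{A}$ by a gauge-equivalent potential $\mathcal{A}+\nabla\chi$ with $\chi$ depending only on $y$ far out, or we simply note that $\mathcal{B}a_2^0(x)\to0$ and absorb it as an $o(1)$ error on the supports below. For $\lambda=-\beta^2+k^2$, $k\in\mathbb{R}$, set
\[
\varphi_n(x,y)=n^{-1/2}\chi\!\left(\frac{x-x_n}{n}\right)e^{ikx}\sqrt{2\beta}\,e^{-\beta y}\,\eta\!\left(\frac{y}{n}\right),
\]
where $\chi,\eta$ are smooth cutoffs with $\eta=1$ near $0$, and $x_n\to\infty$ is chosen so that the supports are disjoint and lie in $\{x>R+n\}$.

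On these supports the operator acts as $-\Delta$ with the Robin condition $\partial_y\varphi=-\beta\varphi$ at $y=0$, up to the small potential term. The function $e^{-\beta y}$ is the Robin ground state on the half-line, with eigenvalue $-\beta^2$. A direct computation then gives
\[
\|(H^{\mathcal{A}}_{\beta,\Omega}-\lambda)\varphi_n\|=\mathcal{O}(n^{-1})+o(1),\qquad \|\varphi_n\|\asymp1 .
\]
The disjoint supports make $\varphi_n$ converge weakly to $0$. If the functions are not in the operator domain, we verify the Weyl criterion in its form version instead. This yields $[-\beta^2,\infty)\subset\sigma_{\mathrm{ess}}$.

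\emph{Inclusion $\subset$.} We show $\inf\sigma_{\mathrm{ess}}\ge-\beta^2$ by Dirichlet--Neumann bracketing, or equivalently by an IMS localization. Take a partition of unity $\chi_1^2+\chi_2^2=1$ with $\chi_1$ supported in a large ball $B_L$ containing the perturbed part of $\Gamma$ and the support of $\mathcal{A}$ (or the region where $|\mathcal{B}a_2^0|$ is non-negligible), and with $|\nabla\chi_j|\le C/L$. The IMS formula gives
\[
q^{\mathcal{A}}_{\beta,\Omega}[\varphi]=\sum_j q^{\mathcal{A}}_{\beta,\Omega}[\chi_j\varphi]-\sum_j\|\varphi\nabla\chi_j\|^2 .
\]
The localized part $\chi_1\varphi$ lives in the bounded Lipschitz domain $\Omega\cap B_{2L}$. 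There the form (Neumann on the artificial boundary, Robin on $\Gamma$) has compact resolvent, by compactness of $\mathcal{H}^1\hookrightarrow L^2$ and of the trace embedding. Hence it contributes only a finite-rank perturbation and does not affect the essential spectrum.

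For $\chi_2\varphi$, which is supported where the boundary is straight, we argue as in the proof of Theorem \ref{thm:dia-mag}. We drop $|\partial_x(\chi_2\varphi)|^2$, use gauge invariance in $y$ for each fixed $x$, and apply the one-dimensional Robin bound $-\beta^2$. If the field has a tail, we also control the error $\|\mathcal{B}a_2^0\|_{L^\infty(|x|>L)}$. Together these give $q[\chi_2\varphi]\ge(-\beta^2-o_L(1))\|\chi_2\varphi\|^2$. Combining the pieces, the form is bounded below by $-\beta^2-C/L^2-o_L(1)$ modulo a finite-rank operator, and letting $L\to\infty$ gives $\inf\sigma_{\mathrm{ess}}\ge-\beta^2$.

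The main technical point is the compactness of the localized inner piece. This requires that the trace term $\beta\int_{\Gamma}|\chi_1\varphi|^2$ be relatively form-compact, which follows from the trace inequality $\int_\Gamma|v|^2\le\epsilon\|\nabla v\|^2+C_\epsilon\|v\|^2$ on the bounded piece, together with the diamagnetic inequality to pass from $\nabla$ to $i\nabla+\mathcal{A}$. A second care point, if $a_2^0$ is not compactly supported, is justifying that the magnetic tail is negligible, which may need a gauge change.
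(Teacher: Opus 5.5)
Your proof is correct. For the inclusion $[-\beta^2,\infty)\subset\sigma_{\mathrm{ess}}$ it is essentially the paper's argument. The paper uses the Weyl sequence $\phi_n=\sqrt{2\beta/n}\,\chi(x/n)e^{ikx}e^{-\beta y}$ with $\mathrm{supp}\,\chi\subset(1,2)$, and bounds the magnetic terms by $\|a_2^0\|_{L^\infty(n,2n)}$. Your extra cutoff $\eta(y/n)$ is unnecessary but harmless.

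Your version is more careful on two points the paper passes over:
\begin{itemize}
\item You take $k\in\mathbb{R}$. The paper writes $k\in\mathbb{Z}$, which by itself only yields a discrete set of $\mu$.
\item You note that if $a_2^0$ merely decays, the test functions violate the magnetic Robin condition $-\partial_y\varphi+i\mathcal{B}a_2^0\varphi=\beta\varphi$ on $\{y=0\}$. They are then not in the operator domain, which justifies your switch to the form version of Weyl's criterion.
\end{itemize}

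The reverse inclusion $\sigma_{\mathrm{ess}}\subset[-\beta^2,\infty)$ is not proved in the paper at all; its proof stops after the Weyl sequence. Your IMS localization, combined with the compact resolvent of the bounded Robin--Neumann piece and the resulting finite-rank lower bound $H_1\ge-\beta^2-CP$, is the standard way to supply this half, and it works. Your proposal therefore covers strictly more than the paper's proof.

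One inaccuracy in that step: $\mathcal{A}=(0,\mathcal{B}a_2^0(x))$ depends only on $x$, so its support is a vertical strip unbounded in $y$, and no ball $B_L$ can contain it. This does not hurt the argument. For each fixed $x$ you have $|i\partial_y\psi+\mathcal{B}a_2^0(x)\psi|=|\partial_y(e^{-i\mathcal{B}a_2^0(x)y}\psi)|$, so the fiberwise one-dimensional Robin bound gives $q[\chi_2\varphi]\ge-\beta^2\|\chi_2\varphi\|^2$ exactly, with no magnetic tail error. On fibres with $|x|<\alpha$ the function $\chi_2\varphi$ vanishes near $\Gamma$, because the bump lies inside $B_L$, so there is no boundary term there. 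You can therefore drop the discussion of the magnetic tail in the $\chi_2$ estimate.
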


\begin{proof}

To prove that any non-negative number $\mu\ge -\beta^2$ belongs to the essential
spectrum of $H^{\mathcal{A}}_{\beta, \Omega}$, we will use Weyl's criterion
\cite[Thm.~VII.12]{RS81}: we have to find a sequence
$\{\phi_n\}_{n=1}^\infty\subset D(H^{\mathcal{A}}_{\beta, \Omega})$ of unit vectors,
$\|\phi_n\|=1$, which converges weakly to zero and
$$
\|H^{\mathcal{A}}_{\beta}\phi_n-\mu\phi_n\|\to 0 \qquad\text{as}\quad n\to\infty
$$
holds. 

First, let us rewrite operator $H^{\mathcal{A}}_{\beta, \Omega}$ in the following form

$$
H^{\mathcal{A}}_{\beta, \Omega}= -\Delta+ 2i  a_2^0 \frac{\partial}{\partial y} + \left(a_2^0\right)^2.
$$
Then
\begin{eqnarray}\nonumber
\int_\Omega\left|H_{\beta, \Omega}^{\mathcal{A}}\phi_n- \mu \phi_n\right|^2\,d x\,d y\\\nonumber=
\int_\Omega\biggl|-\Delta \phi_n+ 2i a_2^0 \frac{\partial\phi_n}{\partial y}+ \left(a_2^0\right)^2 \phi_n- \mu\phi_n\biggr|^2\,d x\,d y
\end{eqnarray}
and 
\begin{eqnarray}\nonumber
\int_\Omega\left|H_{\beta, \Omega}^{\mathcal{A}}\phi_n- \mu \phi_n\right|^2\,d x\,d y\\\label{Weyl}\le
4\int_\Omega\left|-\Delta \phi_n- \mu\phi_n\right|^2\,d x\,d y+ 16\int_\Omega \left(a_2^0\right)^2\left|\frac{\partial\phi_n}{\partial y}\right|^2\,d x\,d y+ 4\int_\Omega \left(a_2^0\right)^4 |\phi_n|^2\,d x\,d y\,.
\end{eqnarray}

For each $\mu= -\beta^2+ k^2,\,k\in\mathbb{Z}$, we will separately estimate each integral in (\ref{Weyl}).
We will construct the Weyl sequence $\phi_n\in C_0^\infty(\Omega)$ as follows 
\begin{equation}\label{psi}
\phi_n(x, y)= \sqrt{\frac{2 
\beta}{n}}\chi\left(\frac{x}{n}\right) e^{i k x} e^{-\beta y},\quad n\in \mathbb{N},
\end{equation}
where $\chi\in C_0^\infty(\mathbb{R})$ is a smooth function with support in the interval $(1, 2)$ with $L^2$ norm equal to one.

We have 
\begin{eqnarray}\nonumber
\int_{\Omega}\left|H_{\beta, \Omega}^{\mathcal{A}}\phi_n- \mu\phi_n\right|^2\,d x\,d y\\\nonumber= \frac{8 \beta}{n}\int_{-1}^1 \int_0^\infty\left| -\frac{1}{n^2}\ddot{\chi}\left(\frac{x}{n}\right)- \frac{2 i k}{n}\dot{\chi}\left(\frac{x}{n}\right) + k^2\chi\left(\frac{x}{n}\right) -
\beta^2 \chi\left(\frac{x}{n}\right)- \mu \chi\left(\frac{x}{n}\right) \right|^2\,e^{-2\beta y}\,d x\,d y\\\nonumber+ \frac{32 \beta^3}{n}\int_{-1}^1 \int_0^\infty \left(a_2^0(x)\right) ^2\left|\chi\left(\frac{x}{n}\right)\right|^2\,e^{-2\beta y}\,\,d x\,d y+ \frac{8 \beta}{n}\int_{-1}^1 \int_0^\infty\left(a_2^0(x)\right)^4 \left|\chi\left(\frac{x}{n}\right)\right|^2\,\,e^{-2\beta y}\,d x\,d y\\\nonumber = \frac{8 \beta}{n}\int_{-1}^1 \int_0^\infty \left| -\frac{1}{n^2}\ddot{\chi}\left(\frac{x}{n}\right) - \frac{2 i k}{n}\dot{\chi}\left(\frac{x}{n}\right)\right|^2\,e^{-2\beta y}\,d x\,d y\\\nonumber+ \frac{32 \beta^3}{n}\int_{-1}^1 \int_0^\infty \left(a_2^0(x)\right)^2\left|\chi\left(\frac{x}{n}\right)\right|^2\,e^{-2\beta y}\,d x\,d y+ \frac{8 \beta}{n}\int_{-1}^1 \int_0^\infty\left(a_2^0(x)\right)^4 \left|\chi\left(\frac{x}{n}\right)\right|^2\,e^{-2\beta y}\,d x\,d y\\\nonumber\le \frac{8}{n^4}\int_1^2|\ddot{\chi}(x)|^2\,d x+ \frac{32 k^2}{n^2}\int_1^2|\dot{\chi}(x)|^2\,d x+ 16 \beta^2\left\|a_2^0\right\|_{L^\infty(n, 2n)}^2 + 4\left\|a_2^0\right\|_{L^\infty(n, 2n)}^4\,.
\end{eqnarray}

Finally in view of assumption (\ref{A0}) the latter completes the theorem.
\end{proof}

\subsection*{Contributions}
Authors have been discussing and working together on the manuscript, contributing equally to the content, presentation, and reviewing the manuscript.
\bigskip

\subsection*{Data Availability}
No datasets were generated or analysed during the current study.
\bigskip

\subsection*{Competing interests}
The authors declare no competing interests.

\section{Acknowledgements}

Y.Z. was co-funded by the Czech Science Foundation (GA\v{C}R), Grant No.~25-16847S.  It was also supported by the University of Ostrava, Grant No.~SGS05/P\v{R}F/2026.


\begin{thebibliography}{10}


\bibitem{BBS24} J.~Bory-Reyes, D.~Barseghyan, B.~ Schneider: Magnetic Schr\"{o}dinger operator with the potential supported in a curved two-dimensional strip,  Mediterranean Journal of Mathematics 21(3) (2024), 1--15.

\bibitem{BE21} D.~Barseghyan, P.~Exner: Magnetic field influence on the discrete spectrum of locally deformed leaky wires, Reports on Mathematical Physics 88 (1) (2015), 47--57.

\bibitem{BEGK01} D.~ Borisov, P. ~Exner, R.R. ~Gadyl’shin and D.~Krej\v ci\v
 r\'{\i}k: Bound states in weakly deformed strips and layers, Ann. Henri  Poincar\'e 2 (2001), 553--572.

\bibitem{BGRS97}W.~ Bulla, F. Gesztesy, W. ~Renger and B.~ Simon: Weakly coupled bound states in quantum waveguides, Proc. Amer. Math. Soc. 125 (1997), no. 5, 1487--1495.

\bibitem{DE95} P. ~Duclos and P. ~Exner: Curvature-induced bound states in quantum waveguides in two and three dimensions, Rev. Math. Phys. 7 (1995), 73--102.

\bibitem{EM14} P.~Exner, M.~ Minakov, Curvature-induced bound states in Robin waveguides and their asymptotical properties,  J.Math.Phys. 55 (2014) 122101.

\bibitem{ES89} P.~Exner, P.~\v{S}eba,
Bound states in curved quantum wavequides, J. Math. Phys. 30 (1989), 2574--2580.

\bibitem{GJ92}
J. ~Goldstone and R.L.~ Jaffe: Bound states in twisting tubes, Phys. Rev. B45 (1992), 14100--14107.
\bibitem{EK15} P.~Exner, H.~Kova\v{r}\'{\i}k: Quantum Waveguides, Springer International, Heidelberg 2015.


\bibitem{K16} A.~ Kachmar, Diamagnetism versus Robin condition and concentration of ground states. Asymptot. Anal. 98 (2016), no. 4, 341--375.  

\bibitem{RS81} M.~ Reed, B. ~Simon: Methods of Modern Mathematical Physics, I. Functional Analysis, II. Fourier Analysis, IV. Analysis of Operators. Self-Adjointness, Academic Press, New York 1981, 1975, 1978.
 
\end{thebibliography}
\end{document}